\documentclass[a4paper, reqno]{amsart}

\usepackage[T1]{fontenc}
\usepackage{lmodern, microtype, amssymb}
\usepackage[scr=dutchcal]{mathalpha}
\usepackage[dvipsnames]{xcolor}
\usepackage[colorlinks=true, citecolor=RoyalBlue, linkcolor=black, urlcolor=RoyalBlue]{hyperref}
\usepackage{tikz-cd}
\usepackage{changepage}

\microtypecontext{spacing=nonfrench}
\microtypesetup{stretch=20, shrink=15, spacing=true}
\tikzcdset{arrow style=tikz, diagrams={>={Computer Modern Rightarrow[length=5pt,width=4pt]}}}
\newtheorem{theorem}{Theorem}[section]
\newtheorem*{main-theorem}{Main theorem}
\newtheorem*{thm}{Theorem}
\newtheorem{proposition}[theorem]{Proposition}
\newtheorem{corollary}[theorem]{Corollary}

\theoremstyle{definition}

\theoremstyle{remark}
\newtheorem{remark}[theorem]{Remark}

\DeclareMathOperator\End{End}

\DeclareMathOperator\Fil{Fil}

\DeclareMathOperator\Gal{Gal}

\DeclareMathOperator\Log{Log}

\DeclareMathOperator\T{T}

\DeclareMathOperator\wideg{wideg}

\def\Bcris{B_\mathrm{cris}}
\def\BdR{B_\mathrm{dR}}

\title{Integrality of height-one formal groups}
\author{Martin Debaisieux}
\address{Département de Mathématique, Université de Mons, 7000 Mons, Belgium}
\email{martin.debaisieux@umons.ac.be}
\thanks{The author was supported by the FRIA of the Fonds de la Recherche Scientifique -- FNRS}
\subjclass{\texttt{14L05}, \texttt{11S31}; \texttt{11S82}, \texttt{37P20}, \texttt{11F80}, \texttt{11F85}}
\date{\today}

\begin{document}

\begin{adjustwidth}{-1mm}{-1mm}
    \begin{abstract}
        Let $K$ be a finite extension of $\mathbb{Q}_p$. We prove that a one-dimensional formal group law over $K$ has integral coefficients if and only if its multiplication-by-$n$ endomorphisms have integral coefficients for all integers $n$, in the height-one case, \emph{i.e.} when the multiplication by $p$ has Weierstrass degree  $p$. The proof uses some $p$-adic Hodge theory.
    \end{abstract}
\end{adjustwidth}

\maketitle

\section{Introduction}

\noindent Fix a prime number $p$ and an algebraic closure $\overline{\mathbb{Q}}_p\!$ of $\mathbb{Q}_p$. Throughout this article, $K$ denotes a finite extension of $\mathbb{Q}_p$, with ring of integers $\mathcal{O}_K$, maximal ideal $\mathfrak{m}_K$ and absolute Galois group $G_K = \Gal(\overline{\mathbb{Q}}_p/K)$. All formal groups in this paper are one-dimensional commutative formal group laws; we refer the reader to \cite{Haz12} for background.

    \subsection{An integrality question} When
    \[
        F(X,Y) = X + Y + \cdots \in K[\![X,Y]\!]
    \]
    is a formal group with all its coefficients in $\mathcal{O}_K$, it follows that the multiplication-by-$n$ endomorphism $[n]_F$ of $F$ has integral coefficients for all $n \in \mathbb{Z}$. We investigate the converse: \emph{to what extent is the integrality of a formal group  determined by that of all its multiplication by an integer?} Simple computations show that polynomial formal groups and Lorentz formal groups are defined over $\mathcal{O}_K$ if and only if their multiplication-by-$2$ is defined over $\mathcal{O}_K$. Here, we prove the following result.

    \begin{main-theorem}[\ref{main-theorem}]
        Let $F$ be a formal group defined over $K$. If $[n]_F$ is defined over $\mathcal{O}_K$ for all $n \in \mathbb{Z}$ and the Weierstrass degree $\wideg([p]_F)\!$ of $[p]_F$ is $p$, then $F$ is defined over $\mathcal{O}_K$.
    \end{main-theorem}

    This result is perhaps not surprising: since the endomorphisms $[n]_F$ arise from repeated iteration of the power series $F$, one might expect them to determine its integrality. However, elementary technics such as expressing the coefficients of $[n]_F$ in terms of those of $F$ or a degree-by-degree analyzing of the identities
    \[
        [n]_F(F(X,Y)) = F([n]_F(X),[n]_F(Y))
    \]
    for every $n \in \mathbb{Z}$ does not seem sufficient to conclude about the integrality of all the coefficients of $F$. Our proof instead makes essential use of the methods in \cite{Deb26} where such families of integral power series can be used to reconstruct their latent formal group and showing it has integral coefficients.

    \subsection{A dynamical reformulation}\label{S: dynamical-reformulation} This question falls within the scope of Lubin's theory of $p$-adic dynamical systems, where one studies the relation between formal groups and families of formal power series without constant term commuting under composition (see \cite{Lub94}). Families of endomorphisms of a formal group are such \makebox[\linewidth][s]{families. We call a series $s \in X \mathcal{O}_K[\![X]\!]$ stable if $s'(0)$ is not zero nor a root of unity}

    \pagebreak

    \noindent and say that $D \subseteq X \mathcal{O}_K[\![X]\!]$ is stable if there exists a stable series in $D$. When $D$ is a stable commuting family over $\mathcal{O}_K$, Lubin showed the existence of a unique formal power series
    \[
        \Log_D(X) \in XK[\![X]\!]
    \]
    such that $\Log_D'(0) = 1$ and $\Log_D(s(X)) = s'(0)\Log_D(X)$ for all $s \in D$, called the logarithm of $D$. If $D$ is a stable family of endomorphisms of a formal group, this is the logarithm of the formal group. In any case, taking
    \[
        F_D(X,Y) = \Log_D^{\circ -1}(\Log_D(X) + \Log_D(Y))
    \]
    yields the unique formal group for which the elements of $D$ are endomorphisms, but it is defined \emph{a priori} over $K$ rather than $\mathcal{O}_K$. Therefore, the data of a formal group can be recovered from a stable family of its endomorphisms. Our main theorem can be rewritten as:

    \begin{thm}[\ref{main-theorem}]
        Let $D = \{s_n(X) \in X \mathcal{O}_K[\![X]\!] \;;\; n \in \mathbb{Z}\}$ be a commuting family with $s_n'(0) = n$ for all $n \in \mathbb{Z}$ and $\wideg(s_p) = p$. Then the associated formal group $F_D\!$ is defined over $\mathcal{O}_K$, and is such that $D \subset \End_{\mathcal{O}_K}(F_D)$.
    \end{thm}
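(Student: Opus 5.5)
The plan is to attach a $p$-adic Galois representation to $D$, use $p$-adic Hodge theory to see that it is crystalline with Hodge--Tate weights $\{0,1\}$, and then invoke the classification of height-one formal groups over $\mathcal{O}_K$ to produce an integral formal group $G$ whose logarithm equals $\Log_D$, up to a change of variable that we will have to show is trivial.

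\emph{Step 1 (a Tate module).} Since $\wideg(s_p) = p$ and $s_p'(0) = p$, the roots of the iterates $s_p^{\circ k}$ in $\mathfrak{m}_{\overline{\mathbb{Q}}_p}$ form a tower of finite sets. Commutativity of $D$ means each $s_n$ permutes these roots. Following the methods of \cite{Deb26} (and Lubin's theory \cite{Lub94}), we would endow
\[
\Lambda = \bigcup_k \ker s_p^{\circ k}
\]
with the group law $F_D$. The point is that $F_D$ converges on the open unit disc, because $\Log_D$ has only mild denominators for a height-one stable family. This makes $\Lambda$ a $\mathbb{Z}_p$-module, and we would show it is isomorphic to $\mathbb{Q}_p/\mathbb{Z}_p$, with $s_n$ acting as multiplication by $n$. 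Its Tate module $T = \varprojlim \ker s_p^{\circ k}$ is then free of rank one, and $G_K$ acts on it through a character $\chi \colon G_K \to \mathbb{Z}_p^\times$.

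\emph{Step 2 (crystallinity).} We would show that $\chi$ is crystalline with Hodge--Tate weight $1$. The tool is the pairing $T \to \BdR$ given by $u \mapsto \Log_D(u)$, realised via the periods $\lim p^k \Log_D(u_k)$ in the style of Colmez and Fontaine. The integrality of $D$ is what keeps these periods in $\Fil^1 \Bcris$. A crystalline character of weight $1$ factors as a Lubin--Tate character of some subfield composed with an unramified twist. By Lubin--Tate and Hazewinkel's functional equation lemma \cite{Haz12}, there is then a height-one formal group $G$ over $\mathcal{O}_K$, together with an isomorphism $\theta \colon \Log_G^{\circ -1}\circ \Log_D$ carrying $\Lambda$ onto the torsion of $G$ Galois-equivariantly.

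\emph{Step 3 (integrality of $\theta$).} Here $\theta(X) = X + \cdots \in K[\![X]\!]$ is a bijection on torsion points, commutes with the Galois action, and intertwines $s_n$ with $[n]_G$. Since both $s_n$ and $[n]_G$ are integral, a Weierstrass-preparation and interpolation argument on the points of $\Lambda$ (which accumulate at the boundary of the disc) should force $\theta$ to have bounded, hence integral, coefficients. Then $F_D = \theta^{\circ -1}\circ G(\theta,\theta)$ is integral, and $D \subset \End_{\mathcal{O}_K}(F_D)$ follows because $\Log_D$ intertwines $s_n$ with multiplication by $n$.

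\emph{Main obstacle.} I expect the hardest part to be Steps 1 and 2, namely proving that $\Lambda$ is genuinely a group isomorphic to $\mathbb{Q}_p/\mathbb{Z}_p$ and that the associated character is crystalline, using only the integrality of the $s_n$ and not of $F_D$. I would first try to bound the denominators of $\Log_D$ directly from $s_p$, using the congruence $s_p(X) \equiv u X^p \pmod{\mathfrak{m}_K}$ with $u$ a unit, in order to obtain convergence on the disc.
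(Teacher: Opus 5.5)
\textbf{Gap: Step 3.} The genuine gap is your Step 3, which is where the whole difficulty (Lubin's conjecture) lies. Contrary to your assessment, Steps 1--2 are handled in the paper by elementary Newton polygon counting and by \cite[Proposition 2.6]{Deb26}. As stated, Step 3 is false, because $\theta=\Log_G^{\circ -1}\circ\Log_D$ is the wrong comparison map. Even when $F_D$ is integral, an $\mathcal{O}_K$-isomorphism $\phi\colon F_D\to G$ satisfies $\Log_G\circ\phi=c\,\Log_D$ with $c=\phi'(0)\in\mathcal{O}_K^\times$. Hence $\theta=\phi\circ\Log_D^{\circ-1}\circ(c^{-1}\Log_D)$ is integral only if $\Log_D^{\circ-1}\circ(c^{-1}\Log_D)$ lies in $\End_{\mathcal{O}_K}(F_D)\simeq\mathbb{Z}_p$, i.e.\ only if $c\in\mathbb{Z}_p^\times$.

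Here is a concrete instance. Let $K/\mathbb{Q}_p$ be unramified of degree $2$, choose $c\in\mathcal{O}_K^\times$ with $c \bmod \mathfrak{m}_K\notin\mathbb{F}_p$, and let $D=\{[n]_F\}$ for $F(X,Y)=X+Y+cXY$. All hypotheses hold, $\chi_D$ is cyclotomic, and $\Log_D(X)=c^{-1}\log(1+cX)$. Take $G=\widehat{\mathbb{G}}_m$, the natural output of your recipe. Then $\theta(X)=(1+cX)^{1/c}-1$, whose $X^n$-coefficient $\binom{1/c}{n}c^n$ has valuation $-\upsilon_p(n!)$. So $\theta$ is unbounded and diverges at every nonzero point of $\Lambda$; it does not carry $\Lambda$ to the torsion of $G$. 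The correct map $\Log_G^{\circ-1}\circ(c\,\Log_D)$ involves a constant you cannot determine in advance.

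Nor can any interpolation on $\Lambda$ produce boundedness. $\Log_D$ vanishes on all of $\Lambda$ yet is unbounded, since a bounded series has only finitely many zeros in $\mathfrak{m}_{\mathbb{C}_p}$. So values on $\Lambda$ do not control coefficients, and ``bounded, hence integral'' does not follow in any case.

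\textbf{What the paper does instead.} The paper never proves integrality of a comparison series. It takes an integral $H$ with $\T_p(H)\simeq\mathbb{Z}_p(\chi_D)$; your Lubin--Tate/unramified-twist construction would serve here in place of Breuil--Kisin. It transports the $\mathbb{Z}_p$-structure of $\T_p(H)$ to $T_D$ via $s_\alpha(\pi)\mapsto[\alpha]_H(\pi_H)$. It then feeds the pair $(T_D,s_p)$ into the explicit functors of integral $p$-adic Hodge theory of \cite[\S3]{Deb26}, tracking $s_p$ throughout. This yields a height-one connected $p$-divisible group $\Gamma_D$ over $\mathcal{O}_K$ having $s_p$ itself as an endomorphism. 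Since only one formal group admits the stable endomorphism $s_p$, the formal group of $\Gamma_D$ is $F_D$. Integrality is thus built in rather than deduced afterwards.

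\textbf{Other repairs.} Your Step 1 also needs repair. $\Log_D^{\circ-1}$ converges only near $0$, so the mild denominators of $\Log_D$ say nothing about whether $F_D$ converges at pairs of points of $\Lambda$; that convergence is essentially part of what has to be proved. No group law is needed. The Newton polygon counting of Proposition \ref{R: dynamical-study} shows that $U\simeq\mathbb{Z}_p^\times$ acts simply transitively on $S_D$, and $\chi_D$ is defined directly by $g.\pi=s_{\chi_D(g)}(\pi)$.

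Finally, your period $\lim p^k\Log_D(u_k)$ is literally $0$, since $\Log_D$ vanishes on $\Lambda$. $\Log_D$ must instead be evaluated on lifts of the $u_k$ to a ring such as $A_{\mathrm{cris}}$.
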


    Proving that $F_D$ is integral is an open problem known as Lubin's conjecture in the literature. Although partial results are available (see \cite{Sar10}, \cite{Ber16}, \cite{HL16}, \cite{Spe18}, \cite{Ber19} over $\mathbb{Q}_p$, and \cite{Deb26} over finite extensions of $\mathbb{Q}_p$ whose ramification index is coprime to $p^2-p$), this paper gives a complete answer in the height-one case over any finite extension of $\mathbb{Q}_p$. We follow the strategy of \cite{Deb26}. We study the dynamical system $D$ in order to recover a crystalline character of weight $1$. This allows us to endow the set of consistent sequences of $D$ with a $\mathbb{Z}_p$-module structure for which the elements of $D$ are endomorphisms. Finally, we apply explicit functors in integral $p$-adic Hodge theory to show that $F_D$ is integral.

\section{The dynamical system}

\noindent Throughout this paper, we assume $D = \{s_n \;;\; n \in \mathbb{Z}\}$ to be a commuting family of elements in $X \mathcal{O}_K[\![X]\!]$ such that $s_n'(0) = n$ for all $n \in \mathbb{Z}$ and $\wideg(s_p) = p$. Recall that the Weierstrass degree of a power series $s(X) \in \mathcal{O}_K[\![X]\!]$ is the $X$-adic valuation of its reduction $s(X) \bmod \mathfrak{m}_K$. Given that $\mathbb{Z}_p$ is the topological closure of $\mathbb{Z}$ in $K$, we may enlarge $D$ by
\[
    D = \{s_\alpha(X) \in X\mathcal{O}_K[\![X]\!] \;;\; \alpha \in \mathbb{Z}_p\}
\]
where $s_\alpha'(0) = \alpha$ and $s_\alpha \circ s_\beta = s_\beta \circ s_\alpha$ for all $\alpha$, $\beta \in \mathbb{Z}_p$, and sending $\alpha$ to $s_\alpha$ is a bijection $\mathbb{Z}_p \rightarrow D$ by \cite[Corollary 1.1.2.1]{Lub94} and \cite[Corollary 1.1.1]{Lub94}.

    \subsection{Torsion points} Let $\mathbb{C}_p$ be the $p$-adic completion of $\overline{\mathbb{Q}}_p$. Whenever we talk about the roots or the fixed points of a series, we mean its roots or its fixed points, respectively, in the open unit disk $\mathfrak{m}_{\mathbb{C}_p}\!$ of $\mathbb{C}_p$. A noninvertible stable $s \in X \mathcal{O}_K[\![X]\!]$ can have no other fixed points than $0$ but many roots. Let
    \[
        \Lambda(D) = \{x \in \mathfrak{m}_{\mathbb{C}_p}\! \mid \exists\, n \geqslant 0,\; s_p^{\circ n}(x) = 0\}.
    \]
    If $s \in D$, then $\Lambda(D)$ is also the set of roots of $s$ and its iterates. On the other hand, an invertible stable $\tilde{s} \in X \mathcal{O}_K[\![X]\!]$ can have no other roots than $0$, but many fixed points. If $\tilde{s} \in D$, Lubin showed in \cite[Proposition 3.2]{Lub94} that $\Lambda(D)$ is the set of periodic points of $\tilde{s}$. In order to recover the latent Galois character of our dynamical system, we study the action of the group
    \[
        U = \{s_\alpha \in D \;;\; \alpha \in \mathbb{Z}_p^\times\} \simeq \mathbb{Z}_p^\times
    \]
    on the partition $\{\Lambda_n(D) \;;\; n \geqslant 0\}$ of $\Lambda(D)$, where $\Lambda_0(D) = \{0\}$ and, for all integers $n \geqslant 1$, $\Lambda_n(D)$ is the set of roots of $s_p^{\circ n}$ that are not roots of $s_p^{\circ n-1}$.

    \begin{proposition}\label{R: dynamical-study}
        For every integer $n \geqslant 1$ and for all $x \in \Lambda_n(D)$:
        \begin{enumerate}
            \item[(1)] For all $\alpha \in \mathbb{Z}_p^\times$, $s_\alpha(x) = x$ if and only if $\alpha \in 1+p^n\mathbb{Z}_p$.
            \item[(2)] $\Lambda_n(D) = \{s_\alpha(x) \;;\; \alpha \in \mathbb{Z}_p^\times\}$.
            \item[(3)] $\Lambda_n(D)$ has cardinality $p^n - p^{n-1}$.
            \item[(4)] $\upsilon_p(x) = 1/(p^n - p^{n-1})$.
            \item[(5)] For all $\alpha \in \mathbb{Z}_p^\times$, if $\upsilon_p(\alpha - 1) = n$ then $\wideg(s_\alpha(X) - X) = p^n$.
        \end{enumerate}
    \end{proposition}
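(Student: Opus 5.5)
Our plan is to use Weierstrass preparation for $s_p$ together with the logarithm $\Log_D$. We first count roots and compute their valuations, and only then extract the statements about the action of $U$.

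\emph{Step 1: counting and valuations.} Since $s_p'(0)=p$ and $\wideg(s_p)=p$, the Newton polygon of $s_p$ has exactly two segments. One joins $(1,1)$ to $(p,0)$, and there is also the root $0$. So $s_p$ has exactly $p$ roots counted with multiplicity: $0$, and $p-1$ nonzero roots of valuation $1/(p-1)$. All the roots are simple, because $s_p$ is an endomorphism of the formal group $F_D$ over $K$ and hence $s_p'(X)=p\,\partial_Y F_D(s_p(X),0)^{-1}\cdots$; more simply, $\Log_D'(s_p(X))s_p'(X)=p\Log_D'(X)$, and $\Log_D'$ has no zeros on the torsion points.

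We then induct on $n$. For $y\in\Lambda_{n-1}(D)$ with $\upsilon_p(y)=1/(p^{n-1}-p^{n-2})$ (or $y=0$ when $n=1$), consider the equation $s_p(X)=y$. The Newton polygon of $s_p(X)-y$ has vertices $(0,\upsilon_p(y))$ and $(p,0)$, with the linear term lying above the segment. Hence there are $p$ solutions, each of valuation $\upsilon_p(y)/p=1/(p^n-p^{n-1})$. This proves (4), and it shows that $\#\Lambda_n(D)=p\,\#\Lambda_{n-1}(D)$ for $n\ge2$, while $\#\Lambda_1(D)=p-1$. This gives (3).

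\emph{Step 2: the action of $U$ and items (1), (2).} The series $s_\alpha$ commute with $s_p$ and have $s_\alpha'(0)$ a unit, so $U$ permutes each $\Lambda_n(D)$. For (1) we argue with $\Lambda_1(D)$ first. If $x\in\Lambda_1(D)$ and $s_\alpha(x)=x$, then $s_{\alpha-1}$ relates to $x$ through the formal group law. Concretely, $\alpha\mapsto s_\alpha(x)$ is a homomorphism from $\mathbb{Z}_p$ to the torsion of $F_D$ evaluated at $x$, since $s_{\alpha+\beta}=F_D(s_\alpha,s_\beta)$. To avoid issues about the convergence of $F_D$, we instead argue with Weierstrass degrees.

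The key claim is (5), and we prove it by induction. If $\alpha=1+p^n u$ with $u$ a unit, we compare $s_\alpha(X)-X$ with $s_{p^n}$ modulo $\mathfrak m_K$ via the identity $s_\alpha = F_D(X, s_{p^nu}(X))$. Reducing mod $\mathfrak m_K$, the difference $s_\alpha(X)-X$ has the same lowest-degree term as $s_{p^n u}(X)$, which has Weierstrass degree $p^n$ because $\wideg$ is multiplicative under composition and $s_u$ is invertible. The point that needs care is that $F_D$ might not be integral. We would try to replace $F_D$ by the integral fact that $\wideg(s_\alpha(X)-X)$ is determined by the fixed-point count. By Lubin's results \cite{Lub94}, the fixed points of $s_\alpha$ form a subset of $\Lambda(D)$. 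Moreover, every $x\in\Lambda_m(D)$ with $m\le n$ is fixed, since on $\Lambda_m$ the action of $\alpha$ depends only on $\alpha \bmod p^m$; this follows from the $\mathbb{Z}_p$-module structure of the roots of $s_{p^m}$. These fixed points give at least $p^n$ zeros, so $\wideg\ge p^n$. For the reverse bound, all fixed points are simple and none lies in $\Lambda_{n+1}(D)$ once (1) is known.

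So we prove (1) and (5) simultaneously by induction on $n$, using the root count of (3). Then (2) follows by orbit–stabilizer: the orbit of $x$ has size $[\mathbb{Z}_p^\times:1+p^n\mathbb{Z}_p]=p^n-p^{n-1}=\#\Lambda_n(D)$.

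The main obstacle is the base of (1): showing that $s_\alpha$ for $\alpha\not\equiv1 \pmod p$ moves the points of $\Lambda_1(D)$, and more generally that stabilizers are exactly $1+p^n\mathbb{Z}_p$. For the base case we would first try the following. On $\Lambda_1(D)$ the map $s_\alpha$ acts through the reduction $\bar s_\alpha$ and commutes with $s_p$, whose nonzero roots behave like $\mu_{p-1}$-torsors. Comparing leading terms $x\mapsto \alpha x + O(x^2)$ at valuation $1/(p-1)$ gives $s_\alpha(x)\equiv\alpha x$ to higher order. Hence $s_\alpha(x)=x$ forces $\alpha\equiv1\pmod p$, with the congruence read relative to valuation $1/(p-1)$.
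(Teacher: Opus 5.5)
Your plan has a genuine gap. It establishes valuations, root counts and the ``if'' half of (1) before controlling the action of $U$, but each of these is only available as an output of that control.

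(a) In Step 1 you assert that the Newton polygon of $s_p$ is the single segment from $(1,1)$ to $(p,0)$. This does not follow from $s_p'(0)=p$ and $\wideg(s_p)=p$ alone, and it can fail for ramified $K$. Over $\mathbb{Q}_3(\varpi)$ with $\varpi^3=3$, the series $3X+\varpi X^2+X^3$ has nonzero roots of valuations $2/3$ and $1/3$. Your treatment of $s_p(X)-y$ at higher levels inherits this assumption.

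(b) Simplicity via ``$\Log_D'$ has no zeros on torsion points'' presupposes that $\Log_D'$ has integral coefficients. That integrality is itself obtained (via Berger's lemma) from simplicity of the roots, which comes out of this proposition, so the argument is circular.

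(c) The claim that $s_\alpha$ fixes $\Lambda_m(D)$ when $\alpha\equiv 1\pmod{p^m}$ rests on a $\mathbb{Z}_p$-module structure on the roots coming from $F_D$. As you note yourself, nothing lets you evaluate $F_D$ on $\Lambda(D)$. A module structure on $T_D$ is only built later, by transport from a formal group whose existence uses $\chi_D$, and hence this proposition.

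(d) Your joint induction is circular. The upper bound in (5) at level $n$ invokes (1) at level $n+1$, and for the ``only if'' half of (1) beyond $n=1$ you give no argument. The leading-term comparison only yields $\alpha\equiv 1\pmod p$.

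The missing idea is a squeeze by orbit--stabilizer at each level, which needs neither valuations nor simplicity.

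\emph{Upper bound.} The bound $\#\Lambda_n(D)\leqslant p^n-p^{n-1}$ comes only from $\wideg(s_p^{\circ n})=p^n$ and the counts at lower levels.

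\emph{Lower bound.} Show that the stabilizer of $x\in\Lambda_n(D)$ in $\mathbb{Z}_p^\times$ lies in $1+p^n\mathbb{Z}_p$. Suppose $\upsilon_p(\alpha-1)=m<n$. If $m=0$, your leading-term argument shows that $x$ is not fixed, so the base case is not the real obstacle. If $m\geqslant 1$, then (5) at level $m$ bounds the number of roots of $s_\alpha(X)-X$ by $p^m$. These roots are already exhausted by $\Lambda_0(D)\sqcup\cdots\sqcup\Lambda_m(D)$, so $x$ is not fixed.

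\emph{Conclusion at level $n$.} The orbit of $x$ then has at least $p^n-p^{n-1}$ elements inside $\Lambda_n(D)$, which forces equality. This gives (1), (2) and (3) at once, with simplicity of the roots as a by-product. Transitivity of $U$, whose elements preserve valuations, shows that all points of $\Lambda_n(D)$ have the same valuation. Only then does the single-segment Newton polygon, and hence (4), follow.

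\emph{Item (5).} The Newton polygon of $s_\alpha(X)-X$ starts at $(1,n)$. The $p^n-1$ points of $\Lambda_1(D)\sqcup\cdots\sqcup\Lambda_n(D)$, now known to be fixed, already account for the whole drop $n$. So the polygon reaches height $0$ exactly at abscissa $p^n$, with no appeal to Lubin's description of fixed points or to simplicity.
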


    \begin{proof}
        By induction on $n$. Assume that $n = 1$ and let $x \in \Lambda_1(D)$. Then, $s_\alpha(x) = x$ implies that $x$ is a nonzero root of $s_\alpha(X) - X$, which thus can not be invertible and hence $\alpha - 1 \in \mathfrak{m}_K \cap \mathbb{Z}_p = p \mathbb{Z}_p$. Since $D$ is a commuting family,
        \[
            \{s_\alpha(x) \;;\; \alpha \in \mathbb{Z}_p^\times\} \subseteq \Lambda_1(D).
        \]
        The second set has cardinality at most $p - 1$ since $\wideg(s_p) = p$ and $0$ is a simple root of $s_p$. The first one has cardinality at least $(\mathbb{Z}_p^\times : 1+p \mathbb{Z}_p) = p - 1$ according to the orbit-stabilizer theorem and what we have just shown. We deduce (2), (3) and (1), and thus that all the elements of $\Lambda_1(D)$ have the same valuation. The Newton polygon of $s_p$ starts at $(1,1)$, ends at $(p,0)$ and have a single line of slope $-1/(p-1)$. This implies (4). If $\upsilon_p(\alpha - 1) = 1$, then the Newton polygon of $s_\alpha(X) - X$ starts at $(1,1)$ and have a segment for $\Lambda_1(D)$, hence must be equal to the Newton polygon of $s_p$. We deduce (5).

        Assume that the result holds for $n \geqslant 1$. If $\upsilon_p(\alpha - 1) \leqslant n$, then $s_\alpha(X) - X$ has at most $p^n$ roots by (5), contained in $\Lambda_0(D) \sqcup \cdots \sqcup \Lambda_n(D)$ by (1) and thus can not have roots outside of this set. Let $x \in \Lambda_{n+1}(D)$. If $s_\alpha(x) = x$, then $\alpha \in 1+p^{n+1}\mathbb{Z}_p$. One has that
        \[
            \{s_\alpha(x) \;;\; \alpha \in \mathbb{Z}_p^\times\} \subseteq \Lambda_{n+1}(D).
        \]
        The second set has cardinality at most $p^{n+1} - p^n$ since $\wideg(s_p^{\circ n+1}) = p^{n+1}$ and (3). The first one has cardinality at least $(\mathbb{Z}_p^\times : 1+p^{n+1} \mathbb{Z}_p) = p^n(p-1)$ by what~we have just shown. We deduce (2), (3) and (1), and thus that the elements of $\Lambda_{n+1}(D)$ have the same valuation. Drawing the Newton polygon of $s_p(X) - s_p(x)$ shows that this valuation is $1/(p^{n+1} - p^n)$, that is (4). If $\upsilon_p(\alpha - 1) = n+1$, then the Newton polygon of $s_\alpha(X) - X$ starts at the point $(1, n+1)$, has a segment for each $\Lambda_i(D)$ with $i \in \{1,\dots, n+1\}$, and thus must be equal to the Newton polygon of $s_p^{\circ n+1}$. This implies (5).
    \end{proof}

    \begin{remark}\label{O: simple-roots}
        This proof yields two additional informations:
        \begin{enumerate}
            \item[(1)] While simplicity of the roots is often assumed in Lubin's conjecture (see for instance the statement on \cite[p. 131]{Sar05}), we do not need to make such an assumption here. Indeed, point (3) of Proposition \ref{R: dynamical-study} implies by induction that the roots of $s_p$ and its iterates are simple.
            \item[(2)] Let $x = (x_n)_{n \geqslant 0}$ of components in $\mathfrak{m}_{\mathbb{C}_p}$ satisfying $x_0 = 0$ and $s_p(x_{n+1}) = x_n$ for all $n \geqslant 0$. Let $K_n = K(x_n)$ for every integer $n \geqslant 0$. This yields a tower of extensions of $K$. For all $n \geqslant 0$, the extension $K_n/K$ is Galois. Indeed, $x_n$ is a root of the Weierstrass polynomial of $s_p^{\circ n}$ which is defined over $K$, and every other root of this polynomial is of the form $s_\alpha(x_i) \in K(x_n)$ with $i \in \{0,\dots, n\}$. Thus, these Galois groups are abelian, and so is $K(x)/K$.
        \end{enumerate}
    \end{remark}

    \subsection{Logarithm}\label{S: logarithm} Since $D$ is a stable commuting family, let
    \[
        \Log_D(X) \in X+ X^2K[\![X]\!]
    \]
    be the logarithm of $D$ (see Subsection \ref{S: dynamical-reformulation}). The relation $\Log_D(s_\alpha(X)) = \alpha \Log_D(X)$ holds for all $\alpha \in \mathbb{Z}_p$. It converges on $\mathfrak{m}_{\mathbb{C}_p}\!$ and its derivative has integral coefficients according to Remark \ref{O: simple-roots} and \cite[Lemma 2.2]{Ber19}. The logarithm defines a formal group $F_D$ over $K$ for which every element of $D$ is an endomorphism.

\section{The latent character}

\noindent If the attached formal group $F_D$ is defined over $\mathcal{O}_K$, it has height $1$ and therefore $G_K$ acts on its $p$-adic Tate module by a crystalline character of weight $1$ with values in $\mathbb{Z}_p^\times$. As $\mathbb{Z}_p^\times \simeq U$, this suggests that this character should be recoverable from the action of $U$. We proceed to show that this is indeed the case, and that the resulting character enjoys the expected properties.

    \subsection{Recovering the Galois character} We are first interested in imitating the action of $G_K$ on the latent Tate module using the one of $U$. Let $T_D$ be the $G_K$-set of $s_p$-consistent sequences, for which each element of $D$ is an endomorphism, acting by componentwise-evaluation:
    \[
        T_D = \big\{(x_n)_{n \geqslant 0} \in \mathfrak{m}_{\mathbb{C}_p}^\mathbb{N}\! \;\big|\; x_0 = 0 \text{ and } \forall\, n \geqslant 0,\; s_p(x_{n+1}) = x_n \big\}.
    \]
    Let $S_D \subset T_D$ be the subset composed of all elements whose second entry is nonzero. It is stable under the action of $G_K$. The $n$-th coordinate of an element of $S_f$ lies in $\Lambda_n(D)$ and, for all $n \geqslant 0$ and $x \in \Lambda_n(D)$, there exists an element of $S_f$ whose $n$-th coordinate is $x$. After proving that $D$ is a family of endomorphisms of a height-one formal group over $\mathcal{O}_K$, it will follow that $T_D$ is the $p$-adic Tate module of this formal group and is a free $\mathbb{Z}_p$-module of rank $1$. The set $S_D$ will be its set of generators. Elements of $T_D$ are either zero or have finitely many initial zero entries followed by an element of $S_D$. In order to describe the action of $G_K$ on $T_D$, it is thus sufficient to describe it on the subset $S_D$. We interpret the latter as a directed tree, as shown in Figure \ref{F: tree}.

    \begin{figure}[h]
        \begin{center}
            \begin{tikzcd}[column sep=-2.25mm, row sep=4.5mm, arrows=-]
                \vdots \ar[dr] & \vdots \ar[d] & \vdots \ar[dl] & \vdots \ar[dr] & \vdots \ar[d] & \vdots \ar[dl] & \vdots \ar[dr] & \vdots \ar[d] & \vdots \ar[dl] &
                \vdots \ar[dr] & \vdots \ar[d] & \vdots \ar[dl] & \vdots \ar[dr] & \vdots \ar[d] & \vdots \ar[dl] & \vdots \ar[dr] & \vdots \ar[d] & \vdots \ar[dl] &
                \vdots \ar[dr] & \vdots \ar[d] & \vdots \ar[dl] & \vdots \ar[dr] & \vdots \ar[d] & \vdots \ar[dl] & \vdots \ar[dr] & \vdots \ar[d] & \vdots \ar[dl] &&
                \vdots \ar[dr] & \vdots \ar[d] & \vdots \ar[dl] & \vdots \ar[dr] & \vdots \ar[d] & \vdots \ar[dl] & \vdots \ar[dr] & \vdots \ar[d] & \vdots \ar[dl] &
                \vdots \ar[dr] & \vdots \ar[d] & \vdots \ar[dl] & \vdots \ar[dr] & \vdots \ar[d] & \vdots \ar[dl] & \vdots \ar[dr] & \vdots \ar[d] & \vdots \ar[dl] &
                \vdots \ar[dr] & \vdots \ar[d] & \vdots \ar[dl] & \vdots \ar[dr] & \vdots \ar[d] & \vdots \ar[dl] & \vdots \ar[dr] & \vdots \ar[d] & \vdots \ar[dl] &\\
                & \ast \ar[drrr]\ar[rrr, densely dotted] &&& \ast \ar[d]\ar[rrr, densely dotted] &&& \ast \ar[dlll] &&& \ast \ar[drrr]\ar[rrr, densely dotted] &&& \ast \ar[d]\ar[rrr, densely dotted] &&& \ast \ar[dlll] &&& \ast \ar[drrr]\ar[rrr, densely dotted] &&& \ast \ar[d]\ar[rrr, densely dotted] &&& \ast \ar[dlll] &&&& \ast \ar[drrr]\ar[rrr, densely dotted] &&& \ast \ar[d]\ar[rrr, densely dotted] &&& \ast \ar[dlll] &&& \ast \ar[drrr]\ar[rrr, densely dotted] &&& \ast \ar[d]\ar[rrr, densely dotted] &&& \ast \ar[dlll] &&& \ast \ar[drrr]\ar[rrr, densely dotted] &&& \ast \ar[d]\ar[rrr, densely dotted] &&& \ast \ar[dlll]\\
                &&&& \ast \ar[drrrrrrrrr]\ar[rrrrrrrrr, densely dotted] &&&&&&&&& \ast \ar[d]\ar[rrrrrrrrr, densely dotted] &&&&&&&&& \ast \ar[dlllllllll] &&&&&&&&&& \ast \ar[drrrrrrrrr]\ar[rrrrrrrrr, densely dotted] &&&&&&&&& \ast \ar[d]\ar[rrrrrrrrr, densely dotted] &&&&&&&&& \ast \ar[dlllllllll]\\
                &&&&&&&&&&&&& \ast \ar[drrrrrrrrrrrrrr]\ar[rrrrrrrrrrrrrrrrrrrrrrrrrrrr,  densely dotted] &&&&&&&&&&&&&&&&&&&&&&&&&&&& \ast \ar[dllllllllllllll]\\
                &&&&&&&&&&&&&&&&&&&&&&&&&&& 0
            \end{tikzcd}
        \end{center}
        \caption{Tree associated to $S_D$ in the case $p = 3$.}
        \label{F: tree}
    \end{figure}

    \noindent Indexed from bottom to top, the vertices at level $n \geqslant 0$ are the elements of $\Lambda_n(D)$. For the sake of readability, we have omitted the arrowheads in the figure. There is an edge from a vertex $v_2$ to a vertex $v_1$ if and only if $s_p(v_2) = v_1$. Moreover, recall that
    \[
        \mathbb{Z}_p^\times \simeq \mathbb{F}_p^\times \times (1+p\mathbb{Z}_p)
    \]
    as topological groups. The horizontal connected components (by a dotted line) at level $n \geqslant 1$ are the orbits in $\Lambda_n(D)$ under the action of $\{s_\alpha \in U \;;\; \alpha \in \mathbb{F}_p^\times\}$ if $n = 1$ and of $\{s_\alpha \in U \mid \upsilon_p(\alpha - 1) = n-1 \}$ if $n \geqslant 2$. This can be easily deduced from the proof of Proposition \ref{R: dynamical-study}.

    \begin{corollary}\label{R: u-action-SD}
        The group $U$ acts simply transitively on the set $S_D$.
    \end{corollary}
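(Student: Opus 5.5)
The plan is to prove that the action of $U$ on $S_D$ is well defined, free and transitive, working one level of the tree at a time with Proposition~\ref{R: dynamical-study} and passing to the limit by compactness of $\mathbb{Z}_p^\times$.

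\textbf{Well-definedness.} For $\alpha \in \mathbb{Z}_p^\times$ and $x = (x_n) \in S_D$, put $s_\alpha(x) = (s_\alpha(x_n))_{n \geqslant 0}$. Since $s_\alpha$ commutes with $s_p$, we have $s_p(s_\alpha(x_{n+1})) = s_\alpha(x_n)$. Also $s_\alpha(0) = 0$. Since $x_1 \in \Lambda_1(D)$, Proposition~\ref{R: dynamical-study}(2) gives $s_\alpha(x_1) \in \Lambda_1(D)$, so $s_\alpha(x_1) \neq 0$ and $s_\alpha(x) \in S_D$. The identity $s_\alpha \circ s_\beta = s_{\alpha\beta}$ follows from the uniqueness in $\alpha \mapsto s_\alpha$: both series lie in $D$ and have derivative $\alpha\beta$ at $0$. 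Hence this is a group action.

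\textbf{Freeness.} Suppose $s_\alpha(x) = x$. Then $s_\alpha(x_n) = x_n$ for every $n \geqslant 1$, and $x_n \in \Lambda_n(D)$. Proposition~\ref{R: dynamical-study}(1) then gives $\alpha \in 1 + p^n\mathbb{Z}_p$ for all $n$. Therefore $\alpha = 1$.

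\textbf{Transitivity.} This is the only step needing care, because Proposition~\ref{R: dynamical-study} only gives transitivity at each finite level. Take $x, y \in S_D$. For each $n$, let
\[
A_n = \{\alpha \in \mathbb{Z}_p^\times \;;\; s_\alpha(x_n) = y_n\}.
\]
By Proposition~\ref{R: dynamical-study}(2) and (1), $A_n$ is nonempty and is a coset of $1 + p^n\mathbb{Z}_p$, hence closed. If $s_\alpha(x_{n+1}) = y_{n+1}$, applying $s_p$ gives $s_\alpha(x_n) = y_n$, so $A_{n+1} \subseteq A_n$.

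The sets $A_n$ form a decreasing sequence of nonempty closed subsets of the compact space $\mathbb{Z}_p^\times$. Their intersection therefore contains some $\alpha$, and in fact exactly one. This $\alpha$ satisfies $s_\alpha(x) = y$.

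One could instead use the nested cosets directly: they determine a unique $p$-adic unit by completeness of $\mathbb{Z}_p$. I expect no real obstacle here; the main point is to check the compatibility $A_{n+1} \subseteq A_n$, which comes from $s_\alpha$ commuting with $s_p$.
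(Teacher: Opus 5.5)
Your proof is correct and takes the same approach as the paper: transitivity level by level from Proposition~\ref{R: dynamical-study}(2), passage to the limit by compactness of $\mathbb{Z}_p^\times$ (which you make precise with the nested closed cosets $A_n$), and freeness from Proposition~\ref{R: dynamical-study}(1). One step is only loosely justified: $s_\alpha\circ s_\beta = s_{\alpha\beta}$ should rest on Lubin's uniqueness result (a series commuting with the noninvertible stable $s_p$ is determined by its linear coefficient), not on the claim that the composite lies in $D$, though this does not affect the argument.
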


    \begin{proof}
        By (2) of Proposition \ref{R: dynamical-study}, it acts transitively on each level of $S_D$. This action passes to the limit $S_D$ by compacity of $\mathbb{Z}_p^\times$. To show that this action is free, we let $x = (x_n)_{n \geqslant 0} \in S_D$. If $s_\alpha(x) = x$ then $s_\alpha(x_n) = x_n$ for all $n \geqslant 0$, and so $\alpha \in 1+p^n \mathbb{Z}_p$ for all $n \geqslant 0$ by (1). Hence, $s_\alpha(X) = s_1(X) = X$ by \cite[Proposition 1.1]{Lub94}.
    \end{proof}

    From now on, we choose $\pi \in S_D$. For all $g \in G_K$, there exists a unique $u_g \in U$ satisfying $g.\pi = u_g(\pi)$ by Corollary \ref{R: u-action-SD}. This association defines a map
    \[
        \chi_D \colon G_K \longrightarrow \mathbb{Z}_p^\times \colon g \longmapsto u_g'(0)
    \]
    which is a character.

    \begin{proposition}
        The map $\chi_D$ is a character of $G_K$ that satisfies $g.\pi = s_{\chi_D(g)}(\pi)$ for all $g \in G_K$ and is independent of the choice of $\pi$.
    \end{proposition}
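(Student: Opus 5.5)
The identity $g.\pi = s_{\chi_D(g)}(\pi)$ holds by construction: $u_g \in U$ and the map $\alpha \mapsto s_\alpha$ is a bijection $\mathbb{Z}_p \to D$, so $u_g = s_{u_g'(0)} = s_{\chi_D(g)}$. What remains is to show that $\chi_D$ is a group homomorphism and that it does not depend on $\pi$. Continuity is not part of the statement, although it follows from the componentwise description below.

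First I would check that $G_K$ commutes with $U$ on $T_D$. Each $s_\alpha$ has coefficients in $\mathcal{O}_K$, and the series converges at every point of $\mathfrak{m}_{\mathbb{C}_p}$. Since $g\in G_K$ acts continuously on $\mathbb{C}_p$ and fixes $K$, we get $g(s_\alpha(x)) = s_\alpha(g(x))$ for $x \in \mathfrak{m}_{\overline{\mathbb{Q}}_p}$. The coordinates of elements of $S_D$ are roots of Weierstrass polynomials over $K$, so they are algebraic and the formula applies. The action is componentwise, so $g.s_\alpha(x) = s_\alpha(g.x)$ for every $x\in S_D$.

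For multiplicativity, take $g,h\in G_K$ and compute
\[
    (gh).\pi = g.\big(s_{\chi_D(h)}(\pi)\big) = s_{\chi_D(h)}(g.\pi) = s_{\chi_D(h)}\big(s_{\chi_D(g)}(\pi)\big) = s_{\chi_D(h)\chi_D(g)}(\pi).
\]
The last equality uses $s_\alpha\circ s_\beta = s_{\alpha\beta}$. This holds because $s_\alpha\circ s_\beta$ lies in the commuting family and has derivative $\alpha\beta$ at $0$. By the uniqueness in \cite[Corollary 1.1.1]{Lub94}, a series commuting with $D$ is determined by its linear term. Equivalently, one can argue with $\Log_D$: both sides satisfy $\Log_D\circ\, \cdot = \alpha\beta\Log_D$. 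By freeness in Corollary \ref{R: u-action-SD}, $\chi_D(gh)=\chi_D(g)\chi_D(h)$, and commutativity of $\mathbb{Z}_p^\times$ removes any ordering issue.

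For independence, let $\pi'\in S_D$ be another choice. By Corollary \ref{R: u-action-SD}, $\pi' = s_\beta(\pi)$ for some $\beta\in\mathbb{Z}_p^\times$. Then
\[
    g.\pi' = s_\beta(g.\pi) = s_\beta\big(s_{\chi_D(g)}(\pi)\big) = s_{\chi_D(g)}\big(s_\beta(\pi)\big) = s_{\chi_D(g)}(\pi'),
\]
using commutativity of $D$. Uniqueness of $u_g$ relative to $\pi'$ then gives the same value $\chi_D(g)$.

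The only step needing care is the identity $s_\alpha\circ s_\beta=s_{\alpha\beta}$, together with the compatibility of the Galois action with evaluating integral power series at points of $\mathfrak{m}_{\mathbb{C}_p}$. Both reduce to Lubin's uniqueness result and to continuity of $g$, so I expect no real obstacle.
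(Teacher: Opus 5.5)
Your proof is correct and follows the paper's argument. The identity $g.\pi = s_{\chi_D(g)}(\pi)$ holds by construction, multiplicativity comes from freeness of the $U$-action in Corollary~\ref{R: u-action-SD}, and independence of $\pi$ is exactly the paper's computation $g.\pi' = u(g.\pi) = u(u_g(\pi)) = u_g(\pi')$. You also make explicit two ingredients the paper uses only implicitly: the identity $s_\alpha\circ s_\beta = s_{\alpha\beta}$ (via Lubin's uniqueness or $\Log_D$), and the fact that $G_K$ commutes with evaluating series over $\mathcal{O}_K$.
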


    \begin{proof}
        The fact that this map is a character follows from the freeness of the action in Corollary \ref{R: u-action-SD}. The relation in $\pi$ is by construction. To show that it is independent of $\pi$, consider $\pi' \in S_D$. There exists a unique $u \in U$ such that $\pi' = u(\pi)$, and
        \[
            g. \pi' = g. u(\pi) = u(g.\pi) = u(u_g(\pi)) = u_g(u(\pi)) = u_g(\pi')
        \]
        because $U$ is a commuting family defined over $\mathcal{O}_K$. This shows that every element of $G_K$ acts on $\pi$ and $\pi'$ with the same power series in $U$.
    \end{proof}

    We are now in essentially the same situation as at the end of \cite[\S2.1]{Deb26}, with $(L; (f, u); \chi_f) = (K; (s_p, s_{1+p}); \chi_D)$. Once the relevant notational changes are made, the constructions of \emph{ibidem} can be recycled. We briefly recall the main steps in the remainder of this article.

    \subsection{Regularity of the character} Let $\Bcris$ and $\BdR$ be some of Fontaine's period rings (see \cite{Fon94}). Recall that $\BdR$ is a field equipped with a decreasing, exhaustive and separated filtration $(\Fil^n \BdR)_{n \in \mathbb{Z}}$ and that there exist a Frobenius $\varphi$ on $\Bcris$ and an injection $K \otimes_{K_0} \Bcris \rightarrow \BdR$, with $K_0$ the maximal unramified extension of $\mathbb{Q}_p$ inside $K$. These two rings are equipped with a compatible action of $G_K$, for which the previous map is $G_K$-equivariant.

    A one-dimensional $p$-adic representation $(V, \rho)$ of $G_K$ is crystalline if there exists a nonzero $z \in \Bcris$ such that $g.z = \rho(g)z$ for all $g \in G_K$. Its Hodge-Tate weight is the maximal $w \in \mathbb{Z}$ such that $z \in \Fil^w \BdR$.

    \begin{proposition}
        The character $\chi_D$ of $G_K$ is crystalline of Hodge-Tate weight $1$.
    \end{proposition}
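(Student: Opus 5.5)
Following \cite{Deb26}, we will exhibit an explicit period for $\chi_D$ in $\Bcris$, namely the logarithm of $D$ evaluated at a canonical lift of $\pi$ to $A_{\inf}$. The steps are as follows.

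First we pass to the tilt. Since $s_p$ has integral coefficients and $\wideg(s_p)=p$, we have $s_p(X)\equiv X^p$ modulo $\mathfrak{m}_K$ up to the Frobenius twist on coefficients, after composing with the $q$-power Frobenius on the residue field. Hence the sequence $\pi=(\pi_n)$ defines an element $\overline{\pi}=(\pi_n \bmod p)_n$ of $\widetilde{E}^+=\varprojlim_{x\mapsto x^p}\mathcal{O}_{\mathbb{C}_p}/p$. If $\mathcal{O}_K$ is ramified, we will need to work with $\mathcal{O}_K\otimes_{\mathcal{O}_{K_0}}W(\widetilde{E}^+)$, or with the Frobenius of $K_0$ composed with $s_p$, exactly as in \cite{Deb26}.

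Next we construct a lift. By the standard argument of \cite[Lemma 9.3]{Ber16} and \cite{Deb26}, adapted with $(f,u)=(s_p,s_{1+p})$, there is a unique $\tilde\pi$ in $\mathcal{O}_K\otimes_{\mathcal{O}_{K_0}} A_{\inf}$ lifting $\overline{\pi}$ and satisfying two conditions:
\[
\varphi_q(\tilde\pi)=s_p(\tilde\pi), \qquad \theta(\varphi_q^{-n}(\tilde\pi))=\pi_n \text{ for all } n.
\]
It is obtained as the limit of $s_p^{\circ k}(\varphi_q^{-k}[\overline{\pi}])$. Uniqueness gives $g(\tilde\pi)=s_{\chi_D(g)}(\tilde\pi)$ for all $g\in G_K$, using the previous proposition together with the fact that $s_\alpha$ commutes with $s_p$ and has integral coefficients.

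Then we define the period. Set $t_D=\Log_D(\tilde\pi)$. Since $\Log_D'$ has integral coefficients (Subsection \ref{S: logarithm}), the coefficients of $\Log_D$ have denominators bounded like $1/n$. Because $\tilde\pi$ lies in the kernel-of-$\theta$-adic neighbourhood of $A_{\mathrm{cris}}$ topologically, the series converges in $\Bcris^+$, just as $\log[\varepsilon]$ does. We then get
\[
g(t_D)=\Log_D(s_{\chi_D(g)}(\tilde\pi))=\chi_D(g)\,t_D,
\]
so $t_D$ is a period. To see that $t_D\neq 0$, note that $\theta(\varphi_q^{-n}(t_D))=\Log_D(\pi_n)$. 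This vanishes for every $n$, since $\pi_n$ is torsion. In fact the functional equation gives $p^n\Log_D(\pi_n)=\Log_D(0)=0$, so $t_D\in\Fil^1$. Nonvanishing will instead come from $t_D$ being $p$-adically nontrivial modulo $p$, via its reduction $\overline{\pi}\neq0$ in a suitable sense, or equivalently from comparing $\Log_D(X)\equiv X$ to first order.

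Finally we show the weight is exactly $1$. We need $t_D\notin\Fil^2\BdR$, that is, $\theta$ of $t_D/\xi$ is nonzero. I expect this to be the main obstacle. I would compute the derivative: write $\tilde\pi=\pi_0$-lift with $\theta(\tilde\pi)=0$. Then $t_D=\tilde\pi\cdot(1+O(\tilde\pi))$ modulo higher filtration, since $\Log_D'(0)=1$ and $\theta(\tilde\pi)=0$. So it suffices that $\tilde\pi$ generates $\ker\theta$ up to a unit, that is, $\tilde\pi\notin(\ker\theta)^2$. This will follow from $\wideg(s_p)=p$ and the valuation $\upsilon_p(\pi_1)=1/(p-1)$ of Proposition \ref{R: dynamical-study}(4). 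Indeed, $\tilde\pi/\varphi_q^{-1}(\tilde\pi)$ behaves like $\xi=[\varepsilon]-1/[\varepsilon^{1/p}]-1$, and its image under $\theta$ is $s_p(\pi_1)/\pi_1$-type quantities whose valuation matches $\upsilon_p(p)$. This is the same criterion as \cite[Lemma 2.5]{Deb26}. A crystalline character with a period in $\Fil^1\setminus\Fil^2$ then has Hodge--Tate weight $1$.
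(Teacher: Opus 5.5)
\textbf{There is a genuine gap: the lift $\tilde\pi$ you construct in your first two steps does not exist in general.} Your overall plan does match the paper's, which imports its period from \cite{Deb26}: a canonical lift $\tilde\pi$ of $\pi$, the period $\Log_D(\tilde\pi)$, crystallinity from the integrality of $\Log_D'$, and the weight from the valuations in Proposition~\ref{R: dynamical-study}(4).

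There are two separate problems with the lift. First, the Frobenius is mismatched. Reducing $\varphi_q(\tilde\pi)=s_p(\tilde\pi)$ modulo $\mathfrak{m}_K$ gives $\overline{\pi}^{\,q}=\overline{s}_p(\overline{\pi})$. If $q=p^f$ with $f\geqslant 2$ and $\overline{s}_p=X^p$, this reads $\overline{\pi}^{\,p}(\overline{\pi}^{\,q-p}-1)=0$, which forces $\overline{\pi}=0$. Also $\theta(\varphi_q^{-n}(\tilde\pi))$ would be congruent to $\pi_{fn}$, not $\pi_n$; you would need $s_p^{\circ f}$ and the subsequence $(\pi_{fn})$.

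Second, and more seriously, $\wideg(s_p)=p$ does not give $s_p\equiv X^p \bmod \mathfrak{m}_K$, twists or not. Take $p$ odd, $u\in\mathbb{Z}_p^\times$ with $u\not\equiv 1 \bmod p$, and $F$ the Lubin--Tate group over $\mathbb{Z}_p$ for $\varpi=pu$. The family $\{[\alpha]_F\}$ satisfies every hypothesis, yet
$[p]_F=[u^{-1}]_F\circ[\varpi]_F\equiv \overline{[u^{-1}]_F}(X^p)=\bar u^{-1}X^p+\cdots$.
Consequently:
\begin{enumerate}
\item $(\pi_n \bmod p)_n$ is not $p$-power compatible, so it does not define an element of $\widetilde{E}^+$.
\item Any $\tilde\pi\in A_{\mathrm{inf}}$ with $\varphi(\tilde\pi)=s_p(\tilde\pi)$ has $\overline{\pi}^{\,p}$ fixed by $\overline{[u^{-1}]_F}$. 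That series has linear coefficient $\bar u^{-1}\neq 1$, so $\overline{\pi}=0$.
\item A $p$-adic valuation count in $\varphi(\tilde\pi)=s_p(\tilde\pi)$ then forces $\tilde\pi=0$, contradicting $\theta(\varphi^{-1}(\tilde\pi))=\pi_1\neq 0$.
\end{enumerate}
In this example the Frobenius-compatible lift lives on $[\varpi]_F$, not on $[p]_F$. So your construction fails already for $K=\mathbb{Q}_p$.

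\textbf{Repairing the lift.} Drop the Frobenius condition: crystallinity only needs a nonzero $G_K$-eigenvector in $K\otimes_{K_0}\Bcris$.
\begin{itemize}
\item Differentiating $\Log_D\circ s_p=p\Log_D$ and using $\Log_D'\in 1+X\mathcal{O}_K[\![X]\!]$ gives $s_p'\in p\,\mathcal{O}_K[\![X]\!]$.
\item Let $\widetilde{\varpi}_K\in\widetilde{E}^+$ be a system of $p$-power roots of a uniformizer $\varpi_K$, and $I=(\varpi_K,[\widetilde{\varpi}_K])$. This ideal contains $\ker\theta_K$, and by Taylor expansion $s_p(a+\delta)-s_p(a)\in I^{m+1}$ whenever $\delta\in I^m$ with $m\geqslant 1$.
\item Hence $\tilde\pi=\lim_n s_p^{\circ n}(\hat\pi_n)$ converges for arbitrary lifts $\hat\pi_n$ of $\pi_n$ and is independent of the lifts. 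This gives $g(\tilde\pi)=s_{\chi_D(g)}(\tilde\pi)$, and $\tilde\pi=s_p(z)$ where $z$ is the lift of the shifted sequence, with $\theta(z)=\pi_1$.
\end{itemize}

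\textbf{Correcting the weight step.} As written it is wrong: $\theta(s_p(z)/z)=s_p(\pi_1)/\pi_1=0$, so no valuation of a $\theta$-image enters. The correct argument runs as follows.
\begin{itemize}
\item Proposition~\ref{R: dynamical-study}(4) gives the reduction of $s_p(z)/z$ in $\widetilde{E}^+$ valuation $\frac{p}{p-1}-\frac{1}{p-1}=1$.
\item By Fontaine's criterion, $s_p(z)/z$ therefore generates $\ker\theta$ in $A_{\mathrm{inf}}$.
\item $z$ is a unit of $\BdR^+$ because $\theta(z)\neq 0$.
\item So $t_D$ is $\tilde\pi$ times a unit, hence lies in $\Fil^1\setminus\Fil^2$. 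This also gives $t_D\neq 0$; the detour through $\theta\circ\varphi_q^{-n}$ proves nothing, as you noticed.
\end{itemize}

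This valuation criterion is specific to $A_{\mathrm{inf}}$. In $\mathcal{O}_K\otimes_{\mathcal{O}_{K_0}}A_{\mathrm{inf}}$ with $e>1$, take $E$ the Eisenstein polynomial of $\varpi_K$. Then $E([\widetilde{\varpi}_K])$ and $([\widetilde{\varpi}_K]-\varpi_K)^e$ have the same reduction, of valuation $1$. Yet the first lies in $\Fil^1\setminus\Fil^2$ and the second in $\Fil^e$. So in the ramified case the weight computation needs the finer input the paper takes from \cite{Deb26}, not just the valuation count.
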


    \begin{proof}
        The construction of a crystalline period and the proof are the same as for \cite[Proposition 2.6]{Deb26}. The crystallinity follows from the properties of the series $\Log_D$ stated in Subsection \ref{S: logarithm}, while the weight comes from the computation of the valuations of the elements in $\Lambda(D)$ in Proposition \ref{R: dynamical-study}, coinciding with those computed in \emph{ibidem}.
    \end{proof}

\section{The latent formal group}

\noindent In \cite{Tat67}, Tate showed that the height-one formal groups over $\mathcal{O}_K$ correspond to the height-one connected $p$-divisible groups over $\mathcal{O}_K$. They give rise to crystalline characters of $G_K$ of weight $1$. Breuil in \cite[Theorem 5.3.2]{Bre00} if $p \neq 2$ and Kisin in \cite[Corollary 2.2.6]{Kis06} for any $p$ show that such characters arise as the $p$-adic Tate modules of such $p$-divisible groups.

    \subsection{The $p$-adic Tate module} Let $H$ be a height-one formal group over $\mathcal{O}_K$ such that its $p$-adic Tate module $\T_p(H)$ satisfies
    \[
        \T_p(H) = \varprojlim_{x \mapsto [p]_H(x)} H[p^n] \simeq \mathbb{Z}_p(\chi_D)
    \]
    where $H[p^n]$ is the group of $p^n$-torsion points of $H$ in $\mathfrak{m}_{\mathbb{C}_p}\!$ for all integers $n \geqslant 0$. In particular, the $G_K$-character associated to $\T_p(H)$ is $\chi_D$. We proceed to transport the structure of $\T_p(H)$ to $T_D$ so that the elements of $D$ remain endomorphisms. To ensure compatibility of their representations, we must identify $[\alpha]_H$ with $s_\alpha$ for all $\alpha \in \mathbb{Z}_p$ when defining this bijection (see \cite[\S 2.5.1]{Deb26}).

    \begin{proposition}
        \label{R: structure-Tate-module}
        There exists a $\mathbb{Z}_p[G_K]$-module structure on the set $T_D$ for which it is a crystalline $G_K$-character of Hodge-Tate weight $1$ and $D \subseteq \End_{\mathbb{Z}_p[G_K]}(T_D)$.
    \end{proposition}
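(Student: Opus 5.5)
We transport the structure of $\T_p(H)$ to $T_D$ through a $G_K$-equivariant bijection $\Theta\colon \T_p(H)\to T_D$ satisfying $\Theta([\alpha]_H(y)) = s_\alpha(\Theta(y))$ for all $\alpha\in\mathbb{Z}_p$. Write $S_H\subset \T_p(H)$ for the set of generators, i.e.\ the sequences whose first coordinate is a nonzero $p$-torsion point. Since $H$ has height one, $[p]_H$ has Weierstrass degree $p$ and $\{[\alpha]_H\}$ is a commuting family of the same type as $D$. Applying Proposition~\ref{R: dynamical-study} and Corollary~\ref{R: u-action-SD} to $H$ (or directly using that $\T_p(H)$ is free of rank one), the group $\mathbb{Z}_p^\times$ acts simply transitively on $S_H$ via $\alpha\mapsto[\alpha]_H$.

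Fix a generator $\eta\in S_H$ and the chosen $\pi\in S_D$. Define $\Theta([\alpha]_H(\eta)) = s_\alpha(\pi)$ for $\alpha\in\mathbb{Z}_p^\times$. This is a bijection $S_H\to S_D$ because both actions are simply transitive. We extend it to all of $\T_p(H)$ using the shift structure: an element of $T_D$ is either $0$ or consists of $k$ initial zeros followed by an element of $S_D$, and likewise on the $H$ side, with $p^k\cdot y$ corresponding to the shift of $y$ by $k$. On an element $(x_n)$ with $x_n\in H[p^n]$, applying $[p]_H$ componentwise shifts the sequence, and applying $s_p$ componentwise does the same on $T_D$. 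So $[p]_H$ corresponds to $s_p$, which is consistent with setting $\Theta(p^k[\alpha]_H\eta)=s_p^{\circ k}(s_\alpha(\pi))$. Since $s_p^{\circ k}\circ s_\alpha = s_{p^k\alpha}$ by uniqueness in $D$, we get $\Theta([\beta]_H\eta)=s_\beta(\pi)$ for every $\beta\in\mathbb{Z}_p$. It follows that $\Theta([\gamma]_H y)=s_\gamma(\Theta y)$ for all $y$, using commutativity $s_\gamma s_\beta=s_{\gamma\beta}$, which holds by the uniqueness of elements of $D$ with given derivative.

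For $G_K$-equivariance, take $g\in G_K$. Then $g.\eta=[\chi_D(g)]_H\eta$ because $\T_p(H)\simeq\mathbb{Z}_p(\chi_D)$, and $g.\pi=s_{\chi_D(g)}(\pi)$ by the Proposition above. Both $g$ and the endomorphisms have coefficients fixed by $G_K$, since $s_\beta$ and $[\beta]_H$ are defined over $\mathcal{O}_K$. Hence
\[
g.\Theta([\beta]_H\eta)=s_\beta(g.\pi)=s_{\beta\chi_D(g)}(\pi)=\Theta\bigl(g.[\beta]_H\eta\bigr).
\]

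Transporting the addition of $\T_p(H)$ along $\Theta$ then makes $T_D$ a free $\mathbb{Z}_p$-module of rank one. The $\mathbb{Z}_p$-action is $\beta\cdot x=s_\beta(x)$, the group $G_K$ acts by the character $\chi_D$, which is crystalline of Hodge--Tate weight $1$ by the previous proposition, and each $s_\alpha$ commutes with $G_K$ and is $\mathbb{Z}_p$-linear, so $D\subseteq\End_{\mathbb{Z}_p[G_K]}(T_D)$.

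The main obstacle is checking that $\Theta$ is well defined and bijective on all of $T_D$, and not only on $S_D$. This requires matching the zero-prefix decomposition on both sides and showing that $s_p$ acts on $T_D$ exactly as the shift. The latter follows from $s_p(x_{n+1})=x_n$ applied componentwise, together with injectivity of $s_p\colon T_D\to T_D$; the latter can be checked on $S_D$ via the freeness in Corollary~\ref{R: u-action-SD}.
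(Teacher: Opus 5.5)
Your proof is correct and is essentially the paper's argument. Like the paper, you match $s_\alpha(\pi)$ with $[\alpha]_H(\pi_H)$, use the zero-prefix decomposition of $T_D$ together with Corollary~\ref{R: u-action-SD} to get a well-defined bijection, obtain $G_K$-equivariance because both sides carry the same character $\chi_D$, and conclude by transport of structure; the only differences are that you write the map in the opposite direction ($\T_p(H)\to T_D$) and spell out details the paper leaves implicit, such as $s_p^{\circ k}\circ s_\alpha=s_{p^k\alpha}$ and the additivity of each $s_\gamma$ for the transported law.
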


    \begin{proof}
        Let $\pi_H$ be a generator of $\T_p(H)$. We define $\tau \colon T_D \rightarrow \T_p(H)$ by letting
        \[
            \tau(s_\alpha(\pi)) = [\alpha]_H(\pi_H)
        \]
        for all $\alpha \in \mathbb{Z}_p$. Viewing the decomposition of $T_D$ and according to Corollary \ref{R: u-action-SD}, it yields a well-defined bijection between $T_D$ and $\T_p(H)$. This map is $G_K$-equivariant because $G_K$ acts on $T_D$ and on $\T_p(H)$ via the same character $\chi_D$. The result follows by transport of structure.
    \end{proof}

    \subsection{The main theorem} From the pair $(T_D, s_p)$ endowed with the structure of Proposition \ref{R: structure-Tate-module}, we have produced in \cite[\S3]{Deb26} a height-one connected $p$-divisible group $\Gamma_D$ defined over $\mathcal{O}_K$ for which $s_p$ is an endomorphism. The construction~relies on applying explicit functors in integral $p$-adic Hodge theory and on tracking the behavior of $s_p$ along these transformations.

    \begin{theorem}\label{main-theorem}
        There exists a formal group $F_D$ over $\mathcal{O}_K$ such that $D = \End_{\mathcal{O}_K}(F_D)$.
    \end{theorem}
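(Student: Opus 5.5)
The plan is to compare the family $D$ with the endomorphisms of the height-one connected $p$-divisible group $\Gamma_D$ over $\mathcal{O}_K$ produced in \cite[\S3]{Deb26} from the pair $(T_D, s_p)$. The structure on $T_D$ is the one given by Proposition \ref{R: structure-Tate-module}, and $s_p$ is an endomorphism of $\Gamma_D$.

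\textbf{Step 1: realise $\Gamma_D$ as a formal group.} Since $\Gamma_D$ is connected of height one and dimension one, Tate's equivalence \cite{Tat67} gives a one-dimensional formal group law $G$ over $\mathcal{O}_K$ with $\Gamma_D = G[p^\infty]$. The construction in \cite{Deb26} keeps track of $s_p$, so in a suitable coordinate $s_p$ is an endomorphism of $G$. I would prove this by showing that $s_p$ acts on the affine algebra of each $\Gamma_D[p^n]$ compatibly with the group structure. Passing to the limit, it then acts on the coordinate ring $\mathcal{O}_K[\![X]\!]$ of the formal group. Its derivative at $0$ is $p$, because it induces multiplication by $p$ on $T_D$ by construction. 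Hence $s_p = [p]_G$.

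\textbf{Step 2: identify the logarithms.} The series $s_p$ is stable and lies in both commuting families $D$ and $\End_{\mathcal{O}_K}(G)$. By Lubin's uniqueness of the logarithm of a stable family, $\Log_G$ is the unique series with derivative $1$ at $0$ satisfying $\Log(s_p(X)) = p\Log(X)$. It therefore equals $\Log_D$. Hence $G = F_D$, and $F_D$ has coefficients in $\mathcal{O}_K$.

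\textbf{Step 3: endomorphisms.} Every $s_\alpha$ satisfies $\Log_D \circ s_\alpha = \alpha \Log_D$, so $s_\alpha = [\alpha]_{F_D}$, which gives $D \subseteq \End_{\mathcal{O}_K}(F_D)$. For the reverse inclusion, $F_D$ has height one over $\mathcal{O}_K$, so its endomorphism ring over $\mathcal{O}_K$ is $\mathbb{Z}_p$. Indeed, an endomorphism is determined by its derivative at $0$, and this derivative acts on $\T_p(F_D)\simeq\mathbb{Z}_p(\chi_D)$ as an element of $\mathbb{Z}_p$. Every endomorphism is thus some $[\alpha]_{F_D} = s_\alpha$, so $D = \End_{\mathcal{O}_K}(F_D)$.

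\textbf{Main obstacle.} The hard part is Step 1: showing that the $p$-divisible group built by the integral $p$-adic Hodge theory functors really carries $s_p$ as an endomorphism in a formal coordinate in which $s_p$ is literally the given power series. Matching Kisin or Breuil modules with $(T_D, s_p)$ only gives abstract compatibility. I would first try to use the bijection $\tau$ of Proposition \ref{R: structure-Tate-module}, which identifies $s_\alpha$ with $[\alpha]_H$ on torsion points. From it I would show that the series relating the coordinates of $\Gamma_D$ and of $D$ is integral, using the fact that both vanish exactly on $\Lambda(D)$ with simple roots (Remark \ref{O: simple-roots}). An integral Weierstrass-preparation argument would then compare the two series.
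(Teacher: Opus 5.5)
Your proposal is correct and is essentially the paper's proof. Take the height-one connected $p$-divisible group $\Gamma_D$ of \cite[\S3]{Deb26} having $s_p$ as an endomorphism, pass to its formal group, identify it with $F_D$ by uniqueness of the logarithm of the stable series $s_p$, and conclude $D = \End_{\mathcal{O}_K}(F_D)$ because a height-one formal group has endomorphism ring $\mathbb{Z}_p$. The paper takes the compatibility of $s_p$ with $\Gamma_D$ (your ``main obstacle'') directly from the construction in \cite[\S3]{Deb26}, rather than from the Weierstrass-preparation comparison you sketch.
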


    \begin{proof}
        The comultiplication of the Hopf algebra of $\Gamma_D$ is a height-one formal group $F_D$ defined over $\mathcal{O}_K$ for which $s_p$ is an endomorphism. Since this formal group is unique, $F_D$ is also the one defined in Subsection \ref{S: logarithm} and, hence, every element of $D$ is an endomorphism of $F_D$. The formal group $F_D$ has height $1$ and so $\End_{\mathcal{O}_K}(F_D)$ is a free $\mathbb{Z}_p$-module of rank $1$, implying the equality with $D$.
    \end{proof}

    \begin{remark}
        Thanks to the decomposition of $\mathbb{Z}_p$, only three series (such as those in $p$, in $1+p$ and in $\zeta$, where $\zeta \in \mathbb{Z}_p^\times$ is a generator of $\mathbb{F}_p^\times$) are needed to generate all the nonzero elements of $D$ and, hence, to carry out all our constructions.
    \end{remark}

\section*{Acknowledgement}

\noindent The author would like to express his gratitude to his supervisor Maja Volkov for her thoughtful guidance and constructive feedback throughout this work.

\end{document}